\newtheorem{theorem}{Theorem}[section]
\newtheorem{lemma}[theorem]{Lemma}
\newcommand{\Z}{\mbox{$\mathbb Z$}}
\newcommand\numberthis{\addtocounter{equation}{1}\tag{\theequation}}
\begin{document}
	\title[]{Sums of $S$-units in $X$-coordinates of Pell equations}

	\author[Nair]{Parvathi S Nair}
	\address{Parvathi S Nair, Department of Mathematics, National Institute of Technology Calicut, 
		Kozhikode-673 601, India.}
	\email{parvathisnair60@gmail.com; parvathi\_p220245ma@nitc.ac.in}

	\author[Rout]{S. S. Rout}
	\address{Sudhansu Sekhar Rout, Department of Mathematics, National Institute of Technology Calicut, 
		Kozhikode-673 601, India.}
	\email{sudhansu@nitc.ac.in; lbs.sudhansu@gmail.com}

	\dedicatory{}
	\thanks{2020 Mathematics Subject Classification: 11B37 (Primary), 11D45, 11J86 (Secondary).\\
		Keywords: Pell equations, Diophantine equations, $S$-units, linear forms in logarithms, reduction method}
	\begin{abstract}
		Let $S$ be a fixed set of primes and let $(X_{l})_{l\geq 1}$ be the $X$-coordinates of the positive integer solutions $(X, Y)$ of the Pell equation $X^2-dY^2 = 1$ corresponding to a non-square integer $d>1$. We show that  there are only a finite number of non-square integers $d>1$ such that there are at least two different elements of the sequence $(X_{l})_{l\geq 1}$ that can be represented as a sum of $S$-units with a fixed number of terms. Furthermore, we solve explicitly a particular case in which two of the $X$-coordinates are product of power of two and power of three.
		
	\end{abstract}
	\maketitle
	\pagenumbering{arabic}
	\pagestyle{headings}
	
	\section{Introduction}
	
	The problem of finding specific terms of a linear recurrence sequence of some particular form has a very rich history.  Peth\H{o} \cite{Petho1982} and Shorey-Stewart \cite{Shorey1983} independently studied the perfect powers in a non-degenerate binary recurrence sequence. In particular, they have considered the Diophantine equation
	\begin{equation}\label{eq0a}
		U_{n}  = x^{z}
	\end{equation}
	in integers $n,x,z$ with $z\geq 2$, where $( {U_{n})_{n \geq 0} }$ is a non-degenerate binary recurrence sequence and proved under certain natural assumptions that \eqref{eq0a} contain only finitely many perfect powers. For example, Fibonacci and Lucas numbers, respectively, of  the form $x^{z}$, with $z > 1$ has been recently proved by Bugeaud et al. \cite{Bugeaud2006}. Peth\H{o} \cite{Petho1991} proved that there are no non-trivial perfect powers in Pell sequence (see also \cite{Cohn1996}). Peth\H{o} and Tichy \cite{Petho1993} proved that there are only finitely  many  Fibonacci  numbers  of  the  form $p^{a} + p^b + p^c$, with prime $p$ and integers $a>b>c\geq 0$. Marques and Togb\'{e} \cite{Marques2013} found all Fibonacci and Lucas numbers of the form $2^a + 3^b + 5 ^c$ under the condition that $c\geq \max\{a, b\}\geq 0$.

	Recently,  Bert\'{o}k et al., \cite{bhpr2} under some mild assumptions gave finiteness result for the solutions of the Diophantine equation 
	\begin{equation}\label{eq0b}
		U_{n}  = b_1 p_1^{z_1} + \cdots+ b_s p_s^{z_s}
	\end{equation}
	in non-negative integers $z_1, \ldots, z_s$ and $n$, where $(U_{n})_{n\geq 0}$ is a binary non-degenerate recurrence sequence of positive discriminant, $p_{1}, \ldots, p_{s}$ are given primes and $b_1, \ldots, b_s$ are fixed integers. 
	
	Diophantine equations combining both $S$-units and recurrence sequences has been well studied by many authors (see \cite{bhpr1, bhpr2, EGL2020, hs}). Suppose $S$ is the set of distinct primes $p_1,\ldots,p_{l}$. Then a rational integer $z$ is an $S$-unit if $z$ can be written as
	\begin{equation}\label{eq0}
		z = \pm p_{1}^{e_1}\cdots p_{l}^{e_{l}},
	\end{equation}
	where $e_1, \ldots, e_{l}$ are non-negative integers and we denote the set of $S$-units by $\mathcal{U}_S$. Let $(U_n)_{n\geq 0}$ be a recurrence sequence of order $k$ with $k\geq 2$.  B\'erczes et al. \cite{bhpr1} considered the equation 
	\begin{equation}\label{eq0sunit}
		U_n = z_1+\cdots+z_r,
	\end{equation}
	with some arbitrary but fixed $r\geq 1$, in unknown $n\geq 0$ and $z_1, \ldots, z_r\in \mathcal{U}_S$ and establish finiteness results for the solutions of \eqref{eq0sunit}.
	
	Let $d, t$ be nonzero integers with $d > 1$ square-free. Consider the Pell equation
	\begin{equation}\label{gpelleqn}
		x^2 - dy^2 = 1
	\end{equation}
	in integers $x, y$. Recently, several mathematicians have investigated the following type of problem related to solution sets (i.e., $X$ and $Y$-coordinates) of Pell equation (\ref{gpelleqn}).  Assume that $\mathbb{U}:= (U_n)_{n\geq 0}$ is some interesting sequence of positive integers and $\{(X_m, Y_m)\}_{m\geq 1}$ are sequence of solutions of Pell equation \eqref{gpelleqn}. What can one say about the number of solutions of the containment $X_m\in \mathbb{U}$ for a generic $d$? What about the number of solutions of the containment $Y_m\in \mathbb{U}$? For most of the binary recurrent sequences(Fibonacci numbers \cite{ birlucatogbe2018, lucatogbe2018}, tribonacci numbers \cite{luca 2017}, rep-digits in some given integer base $b\geq 2$ \cite{dlt2016, fl2018, glz2020}), the equation $X_m\in \mathbb{U}$ has at most one positive integer solution $m$ for any given $d$ except for finitely many values of $d$.  Erazo et al., \cite{EGL2020} showed under certain conditions that there are only a finite number non-square integers $d>1$ such that there are at least two different elements of the sequence $(X_m)_{m\geq 1}$ that can be represented as a linear combination of prime powers with fixed primes, i.e., 
	\[X_m  = c_1p_1^{n_1}+\cdots + c_sp_s^{n_s}.\]
	
	In this paper, we extend this result to sums of $S$-units. In particular, we will prove that there are only a finite number non-square integers $d>1$ such that there are at least two different elements of the sequence $(X_m)_{m\geq 1}$ that can be represented as a sum of $S$-units. Firstly, we prove the general case when elements of the sequence $(X_m)_{m\geq 1}$ equal to  sum of $S$-units, that is, 
	\begin{equation}\label{eq3a}
		X_{l}  = z_1+\cdots+ z_r,
	\end{equation} 
	where $z_1, \ldots, z_r\in \mathcal{U}_S$   and $l$ are positive integers. Then in next result, we solve a particular case of \eqref{eq3a} with $S=\{2, 3\}$. 
	\section{Notation and Main Results} 
	Let $d>1$ be an integer which is not a square. The Pell equation
	\begin{equation}\label{eqpell}
		X^2-dY^2 = 1, 
	\end{equation}
	where $X, Y \in \Z_{>0}$ has infinitely many positive integer solutions $(X, Y)$ and  have the form 
	\[X+Y \sqrt{d} = X_l+Y_l\sqrt{d} = (X_1+Y_1\sqrt{d})^l\] for some $k\in \Z_{>0}$ and $(X_1, Y_1)$ is the smallest positive integer solution of \eqref{eqpell}. The sequence $(X_l)_{l\geq 1}$ is a binary recurrence sequence satisfying the recurrence relation 
	\begin{equation}
		X_{l} = 2X_1 X_{l-1}-X_{l-2}
	\end{equation}
	for all $l\geq 2$. Setting
	\begin{equation}
		\gamma:= X_1+Y_1\sqrt{d} \quad \mbox{and}\quad \eta:= X_1 -Y_1 \sqrt{d} = \gamma^{-1},
	\end{equation}
	so $\gamma \cdot \eta = X_1^2-dY_1^2= 1$. The Binet formula for $X_{l}$ and $Y_l$ are 
	\begin{equation}\label{eq7a}
		X_{l} = \frac{\gamma^l+ \eta^l}{2} \quad \mbox{and}\quad Y_{l} = \frac{\gamma^l - \eta^l}{2\sqrt{d}}
	\end{equation} holds for all non-negative integers $l$. Let $s\in \Z_{>0}$  be fixed. We are interested to determine for which positive integers $d>1$, the sequence $(X_{l})_{l\geq 1}$ of $X$-coordinates of \eqref{eqpell} has at least two different terms that can be represented as in \eqref{eq3a}. By denoting $z_i= p_{1}^{n_{i1}}\cdots p_{s}^{n_{is}}$ for all $1 \leq i \leq r$, we write \eqref{eq3a} as
	\begin{equation}\label{eq3d}
		X_{l}= p_1^{n_{11}}p_2^{n_{12}}\cdots p_s^{n_{1s}}+\cdots+ p_1^{n_{r1}}p_2^{n_{r2}}\cdots  p_s^{n_{rs}}.
	\end{equation} 
	such that,
	\begin{equation}\label{eq3c}
		n_{rs}=\max_{\substack{1\leq i \leq r\\1\leq j \leq s}}\{n_{ij}\}.
	\end{equation}
	Now we are ready to state our main theorem. 
	\begin{theorem}\label{th1}
		Let $s$ be a fixed positive integer. Let $p_1 \leq \cdots \leq p_s$ be fixed primes with $p_s$ odd. Let $X_{l}$ be the $X$-coordinate of the Pell equation \eqref{eqpell} with $d>1$ non-square. Let $\epsilon>0$ be arbitrary. Consider \eqref{eq3a} in $l\geq 1$ and $z_1, \ldots, z_r$ satisfying $|z_i|^{1+\epsilon}<|z_r|, ( i=1,\ldots ,r-1)$ and \eqref{eq3c}. 
		Let $ T_{d}$ be the set of solutions $ (l, z_1, \ldots, z_r) $ of \eqref{eq3a}.
		Then there exists effectively computable constants $c_8(s)$ and $c_{9}(s)$ depend only on the parameters $s, p_s, r \text{ and }\epsilon$ such that
		\begin{enumerate}[label=\textnormal{(\roman*)}]
			\item if $d <p_s^{2c_8}r^2$, then $\# T_{d} \leq c_9(s)$,
			\item if $d \geq p_s^{2c_8}r^2$, then $\# T_{d} \leq  1$.
		\end{enumerate}
	\end{theorem}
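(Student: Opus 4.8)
The plan is to extract from each representation a small linear form in logarithms and then to play two representations against each other so as to annihilate the $d$-dependent quantity $\log\gamma$. Throughout write $\epsilon'=\epsilon/(1+\epsilon)$, and recall from \eqref{eq7a} that $2X_l=\gamma^l+\eta^l$ with $\eta=\gamma^{-1}$, so that $X_l=\tfrac12\gamma^l(1+\gamma^{-2l})$; recall also that $\gamma>2\sqrt d$, whence $d<\gamma^2/4$ and $\log\gamma\asymp\log d$. Fix a solution $(l,z_1,\dots,z_r)\in T_d$ with dominant term $z_r=p_1^{n_{r1}}\cdots p_s^{n_{rs}}$. The domination hypothesis $|z_i|^{1+\epsilon}<|z_r|$ gives $|z_i/z_r|<|z_r|^{-\epsilon'}$, so $X_l=z_r(1+\rho)$ with $|\rho|\le(r-1)|z_r|^{-\epsilon'}$, and since $z_r\asymp\gamma^l$ one obtains for
\[
\Lambda_l:=l\log\gamma-\sum_{j=1}^{s}n_{rj}\log p_j-\log 2
\]
the upper bound $|\Lambda_l|\ll_r \gamma^{-\epsilon' l}$. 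Feeding $\Lambda_l$ into a lower bound for linear forms in logarithms (Matveev) — in which the only factor depending on $d$ is the height $h(\gamma)=\tfrac12\log\gamma$ — and comparing with this upper bound yields, in the usual self-improving fashion, a bound of the shape $l\le c(s,p_s,r,\epsilon)\log\log\gamma$ on the index of every solution.

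Now suppose $T_d$ contains solutions with two distinct indices, say $(l,z_\bullet)$ and $(m,w_\bullet)$ with $l>m\ge1$, where $w_i=p_1^{m_{i1}}\cdots p_s^{m_{is}}$ and $w_r$ is dominant. Writing $\Lambda_m$ for the analogue of $\Lambda_l$ attached to the second solution, I would form the combination that eliminates $\log\gamma$:
\[
\Phi:=m\Lambda_l-l\Lambda_m=(l-m)\log 2+\sum_{j=1}^{s}\bigl(l\,m_{rj}-m\,n_{rj}\bigr)\log p_j .
\]
The point is that $\Phi$ is a linear form in the \emph{fixed} logarithms $\log 2,\log p_1,\dots,\log p_s$, whose heights no longer involve $d$; only the integer coefficients do. From $|\Lambda_l|\ll_r\gamma^{-\epsilon' l}$ and $|\Lambda_m|\ll_r\gamma^{-\epsilon' m}$ together with $m<l$ we get the upper bound $|\Phi|\ll_r l\,\gamma^{-\epsilon' m}$.

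Next I would apply a lower bound for linear forms in logarithms to $\Phi$. Since $2,p_1,\dots,p_s$ are multiplicatively independent, $\Phi=0$ would force $l-m=0$, contradicting $l>m$ (the mixed degenerate possibilities, in which $p_1=2$, are cleared by the same $\Q$-linear independence of the $\log p_j$), so $\Phi\ne0$ and Matveev applies with a constant $C(s)$ depending only on $s$ and $p_s$. The coefficients of $\Phi$ are $O(l\cdot\max_j\{n_{rj},m_{rj}\})=O(l^2\log\gamma)$, so the relevant height satisfies $\log B''\ll\log l+\log\log\gamma$, giving $\log|\Phi|>-C(s)\bigl(2\log l+\log\log\gamma\bigr)$. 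Comparing this with $\log|\Phi|\le\log(c_r l)-\epsilon' m\log\gamma$ and using $m\ge1$ yields
\[
\epsilon'\log\gamma\le \log(c_r l)+2C(s)\log l+C(s)\log\log\gamma .
\]
Because the index bound of the first paragraph forces $l\le c(s,p_s,r,\epsilon)\log\log\gamma$, the terms in $\log l$ are of lower order than $\log\log\gamma$, so the right-hand side is $o(\log\gamma)$. For $\gamma$ large this is impossible, and unwinding the inequality bounds $\gamma$, hence $d<\gamma^2/4$, by an explicit constant depending only on $s,p_s,r,\epsilon$. Choosing $c_8$ so that $p_s^{2c_8}r^2$ exceeds this bound on $d$ proves part (ii): if $d\ge p_s^{2c_8}r^2$ then no two distinct indices can occur, while running the same machinery with $l=m$ (so that $\Phi$ reduces to a form in the $\log p_j$ alone) forces $z_r=w_r$ and, upon iterating on the subdominant terms, a unique $S$-unit representation; hence $\#T_d\le1$. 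For part (i), when $d<p_s^{2c_8}r^2$ both $\log\gamma$ and therefore every admissible index $l$ are bounded in terms of $s,p_s,r,\epsilon$; combining this with a bound — uniform in $d$ and coming from the theory of $S$-unit equations — for the number of representations of a fixed value $X_l$ as a sum of $r$ dominated $S$-units yields $\#T_d\le c_9(s)$.

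I expect the main obstacle to lie in the effectivity and uniformity bookkeeping rather than in the overall strategy: one must verify that the subdominant error giving $|\Lambda_l|\ll_r\gamma^{-\epsilon' l}$ is controlled with constants free of $d$, that the $d$-dependent height $h(\gamma)=\tfrac12\log\gamma$ genuinely cancels in $\Phi$ so that only the mild residual dependence through $\log B''\ll\log\log\gamma$ survives (this is exactly what produces the contradiction for large $\gamma$), and that the degenerate case $\Phi=0$ together with the count of $S$-unit representations per index are disposed of with constants depending only on $s,p_s,r,\epsilon$. Passing from the inequality $\epsilon'\log\gamma\le C(s)\log\log\gamma+\cdots$ to an explicit threshold for $d$, and hence to an explicit value of $c_8$, is then routine.
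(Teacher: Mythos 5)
Your strategy coincides with the paper's own proof: a first application of Matveev's theorem (Lemma \ref{lem12}) to the form $\Lambda_l=l\log\gamma-\log 2-\sum_j n_{rj}\log p_j$, whose smallness comes from the domination hypothesis, controls the exponents in terms of $\log\gamma$; then the cross-combination $\Phi=m\Lambda_l-l\Lambda_m$, which kills $\log\gamma$, is fed into Matveev a second time (now over $\Q$, with $d$-free heights), and the comparison pins $\log\gamma$, hence $d$, below an explicit threshold. Your bookkeeping ($l\ll\log\log\gamma$, then $\epsilon'\log\gamma\ll\log\log\gamma$) is a repackaging of the paper's bounds on $n_{rs}$, $a_{rs}$, $b_{rs}$ obtained via Lemma \ref{lemma3.3}. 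In fact your formula for $\Phi$ is the correct one: the paper's display \eqref{eq22} is miscomputed, since the coefficient of $\log p_k$ in $l_1\Gamma_1^{(2)}-l_2\Gamma_1^{(1)}$ is $l_2a_{rk}-l_1b_{rk}$, not $(l_2-l_1)m_{rk}$.

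The genuine gap is your non-vanishing argument for $\Phi$. Since $\Phi=\log\bigl((2w_r)^l/(2z_r)^m\bigr)$, one has $\Phi=0$ exactly when $(2z_r)^m=(2w_r)^l$, and when $2\in S$ this can happen with $l\neq m$: concretely $w_r=4$, $z_r=32$, $l=2m$ gives $(2z_r)^m=64^m=8^{2m}=(2w_r)^l$. $\Q$-linear independence of the logarithms of distinct primes only forces the integer coefficients to vanish; when $p_1=2$ the coefficient of $\log 2$ in $\Phi$ is $(l-m)+(lm_{r1}-mn_{r1})$, which can vanish with $l>m$, so the ``mixed degenerate possibilities'' are not cleared by independence and must be excluded by a separate argument before Matveev can be applied. (You should know the paper never verifies this non-vanishing either; its typo in \eqref{eq22} makes $\Gamma_2$ look like a form whose non-vanishing is trivial.) A second weak point is your treatment of equal indices and of part (i): once the dominant terms are cancelled, equal sums of $S$-units need not agree termwise, so ``iterating on the subdominant terms'' does not force uniqueness. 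Indeed, for $d=(3^k+9)^2-1$ the fundamental solution has $X_1=3^k+9=3^k+1+8=3^k+3+6$, two admissible tuples (for $S=\{2,3\}$, $r=3$, small $\epsilon$, all conditions including \eqref{eq3c} hold) with $d$ arbitrarily large; hence the literal tuple count in part (ii) cannot be bounded by $1$, and both the statement and the paper's proof — which only ever compares two solutions with distinct indices $l_1<l_2$ — must be read as counting the indices $l$ occurring in $T_d$ rather than the tuples themselves.
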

	Note that Theorem \ref{th1} extends the corresponding statement from \cite{EGL2020} to this more general situation. Our next theorem illustrates Theorem \ref{th1}, in which we explicitly solve \eqref{eq3d} for the case $r=1, s=2$ and $S=\{2,3\}$.
	\begin{theorem}\label{th2}
		Let $X_{l}$ be the $X$-coordinate of the Pell equation \eqref{eqpell} with $d>1$ non-square. Then there exist no $d$ for which the equation 
		\begin{equation}\label{eq3b}
			X_{l}=2^{n_1}3^{n_2}, \qquad {n_1\leq n_2}
		\end{equation}
		has at least two positive integer solutions. 
	\end{theorem}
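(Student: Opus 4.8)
The plan is to argue by contradiction: suppose some non-square $d>1$ admits two solutions $l_1<l_2$ of \eqref{eq3b}, say $X_{l_1}=2^{a_1}3^{b_1}$ and $X_{l_2}=2^{a_2}3^{b_2}$ with $a_i\le b_i$, and then derive an absolute effective bound on $d$ together with bounds on all the exponents, after which a finite check rules everything out. First I would substitute the Binet formula \eqref{eq7a} into each equation. Since $\eta=\gamma^{-1}$ with $|\eta|<1$, the identity $2X_{l_i}=\gamma^{l_i}+\gamma^{-l_i}$ gives $|\gamma^{l_i}-2^{a_i+1}3^{b_i}|=\gamma^{-l_i}$, so the three-term linear forms $\Lambda_i:=l_i\log\gamma-(a_i+1)\log 2-b_i\log 3$ satisfy $|\Lambda_i|\ll\gamma^{-2l_i}$ (for $l_i$ not too small, the remaining small cases being disposed of directly). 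The constraint $a_i\le b_i$ is exactly hypothesis \eqref{eq3c}, and it forces the largest exponent onto the largest prime, giving $b_i\ll l_i\log\gamma$ and $\max\{l_i,a_i,b_i\}\ll l_i\log\gamma$.

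Next I would apply Matveev's lower bound to $\Lambda_2$. Using $h(\gamma)=\tfrac12\log\gamma$ (as $\gamma$ is a quadratic unit with conjugate $\gamma^{-1}$ and minimal polynomial $x^2-2X_1x+1$), $h(2)=\log 2$, $h(3)=\log 3$, and the coefficient-height bound $B\ll l_2\log\gamma$, the factors $\log\gamma$ cancel against the upper estimate $|\Lambda_2|\ll\gamma^{-2l_2}$, leaving $l_2\ll\log\log\gamma$. The decisive step is to eliminate $\log\gamma$: setting $\Gamma:=l_1\Lambda_2-l_2\Lambda_1=A\log 2+B\log 3$ with $A=l_2(a_1+1)-l_1(a_2+1)$ and $B=l_2b_1-l_1b_2$, one has $|\Gamma|\le l_1|\Lambda_2|+l_2|\Lambda_1|\ll l_2\gamma^{-2l_1}$, while $|A|,|B|\ll l_1l_2\log\gamma$. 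If $\Gamma=0$, then $A=B=0$; writing $g=\gcd(l_1,l_2)$ one checks that $2^{a_i+1}3^{b_i}=w^{\,l_i/g}$ for a fixed integer $w=2^v3^u$, so $\delta^m+\delta^{-m}=w^m$ with $\delta=\gamma^g>1$ holds for the two distinct values $m=l_1/g,\,l_2/g$; dividing by $\delta^m$ shows the left side $1+\delta^{-2m}$ strictly decreases in $m$ while the right side $(w/\delta)^m$ strictly increases, so equality holds for at most one $m$, a contradiction. Hence $\Gamma\ne 0$ and a lower bound for linear forms in the two logarithms $\log 2,\log 3$ applies.

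Combining that lower bound $\log|\Gamma|\gg-(1+\log\max\{|A|,|B|\})\gg-(\log l_2+\log\log\gamma)$ with the upper bound $\log|\Gamma|<\log(4l_2)-2l_1\log\gamma$ yields $l_1\log\gamma\ll\log\log\gamma$; since $l_1\ge 1$ this forces $\log\gamma\ll\log\log\gamma$, hence an absolute effective bound $\log\gamma<K$, and therefore $d<\gamma^2<e^{2K}$. Back-substitution then bounds $l_2$ and all exponents $a_i,b_i$. Because the constant $K$ from Matveev is far too large for a direct search (the a priori range is comparable to the $d<3^{2c_8}$ afforded by Theorem~\ref{th1} with $r=1$, $p_s=3$), I would then run a Baker--Davenport / LLL reduction on the forms $\Lambda_i$ and $\Gamma$ to collapse the bounds to a feasible range, and finally verify by direct computation that no $d$ in that range yields two representations of the shape \eqref{eq3b}.

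I expect the main obstacle to be twofold. Conceptually, the heart of the matter is the elimination of $\log\gamma$: a single solution gives only $l\ll\log\log\gamma$ with no control on $\gamma$, and it is precisely the presence of the \emph{second} solution that, after removing $\log\gamma$, upgrades the estimate to an absolute bound on $d$. Computationally, the difficulty is the reduction: the first-pass bounds are astronomically large, so the reduction must be iterated and adapted to the two-variable form $\Gamma$, and the resulting finite verification over the surviving values of $d$ must be organized to remain genuinely checkable.
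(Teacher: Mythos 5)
Your Baker-theory skeleton coincides with the paper's: the same three-log linear forms $\Lambda_i=l_i\log\gamma-(a_i+1)\log 2-b_i\log 3$ with $|\Lambda_i|\ll\gamma^{-2l_i}$, the same elimination $\Gamma=l_1\Lambda_2-l_2\Lambda_1$ to remove $\log\gamma$, and the same two-log lower bound which, combined with $l_2\ll\log\log\gamma$, forces an absolute bound on $\log\gamma$ and hence on $d$ and all exponents. Indeed your non-vanishing argument for $\Gamma$ (descent to $\delta^{m}+\delta^{-m}=w^{m}$ and the monotonicity of the two sides in $m$) is more careful than the paper, which applies Matveev to $e^{\Gamma_2}-1$ without ever verifying $\Gamma_2\neq 0$. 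The genuine gap is in the endgame, which for a statement like Theorem \ref{th2} (an explicit ``no solutions'' claim) is not a formality but half the proof.

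Concretely: the only reduction you can run without knowing $d$ is the one on $\Gamma$, i.e.\ continued fractions of $\log 2/\log 3$, and that bounds \emph{exponents} only (the paper gets $a_2\leq 253$ after iterating); it does not bound $d$ to a searchable range. With $a_2\leq 253$ one still has $X_{l_1}=2^{a_1}3^{a_2}\leq 6^{253}$ and hence only $d\leq X_{l_1}^2+1\approx 10^{394}$, so ``verify by direct computation that no $d$ in that range yields two representations'' cannot be executed; and your other proposed reduction, on the forms $\Lambda_i$, requires the numerical value of $\log\gamma$, i.e.\ must be run separately for each $d$ --- circular, since $d$ is exactly what you cannot enumerate. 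The missing idea is the paper's change of enumeration variable from $d$ to $(l_1,X_1)$: writing $X_{l_1}=P_{l_1}(X_1)$ with $P_l(X)=\frac{1}{2}\bigl(\bigl(X+\sqrt{X^2-1}\bigr)^l+\bigl(X-\sqrt{X^2-1}\bigr)^l\bigr)$ a polynomial with integer coefficients, inequality \eqref{eq18} gives $l_1\leq 150$ once $d>401$, so the finitely many equations $P_{l_1}(X_1)=2^{a_1}3^{a_2}$ with $l_1\in[2,150]$, $a_1\leq a_2\leq 253$ can be checked for integer roots $X_1>20$ (and $1<d\leq 401$ checked directly); this search is empty, forcing $l_1=1$. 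Only then is $\gamma=X_1+\sqrt{X_1^2-1}$ confined to the explicit finite list $X_1=2^{a_1}3^{a_2}$, $a_1\leq a_2\leq 253$, over which a per-$X_1$ Baker--Davenport reduction with $\tau=\log 3/\log\gamma$ is feasible (the paper gets $b_2\leq 52$, $l_2\leq 130$) and the final verification of \eqref{eq3b} can actually be carried out. Without this pivot your plan stalls exactly where you anticipated trouble, and the bounds never collapse to a checkable set of moduli $d$.
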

	
	\section{Auxiliary results}
	In this section, we will prove some preliminary results to prove main theorems.
	\begin{lemma}\label{lem1}
		Let $\gamma >0$ be the fundamental solution of \eqref{eqpell} for $d>1$ non-square. Then 
		\begin{equation}\label{eq6}
			\left(\frac{1}{1+\sqrt{2}}\right)\gamma^{l} \leq X_{l} \leq (2-\sqrt{2})\gamma^{l}, \quad \mbox{for all $l \geq 1$}.
		\end{equation}
	\end{lemma}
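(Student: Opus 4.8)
The goal is to bound $X_l$ between two explicit multiples of $\gamma^l$. The plan is to start from the Binet formula $X_l = (\gamma^l + \eta^l)/2$ established in \eqref{eq7a}, together with the key relation $\gamma\cdot\eta = 1$, which forces $\eta = \gamma^{-1}$ and $0 < \eta < 1$ since $\gamma > 1$. The strategy is to write $X_l = \tfrac{1}{2}\gamma^l\bigl(1 + \eta^{2l}\bigr)$ and then control the correction factor $\tfrac{1}{2}(1+\eta^{2l})$ by bounding $\eta^{2l}$ from above and below across all $l \geq 1$.

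**The upper bound.** Since $0 < \eta < 1$, we have $\eta^{2l} < 1$ for every $l \geq 1$, so $\tfrac{1}{2}(1+\eta^{2l}) < 1$. This already gives $X_l < \gamma^l$, but the claimed constant $2-\sqrt 2 \approx 0.586$ is sharper, so a cruder inequality is not enough. The refinement is to observe that the correction factor is increasing in $\eta$ and decreasing in $l$, so its supremum over admissible configurations is attained at the extreme case. The key observation is that the smallest possible fundamental solution occurs at $d=2$, giving $\gamma = 1+\sqrt 2$, and the worst case for the upper bound is $l=1$, where $X_1 = \tfrac{1}{2}(\gamma+\eta)$; evaluating $\tfrac{1}{2}(1 + \gamma^{-2})$ at $\gamma = 1+\sqrt 2$ yields exactly $2 - \sqrt 2$. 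The argument must verify that this is genuinely the maximum, i.e. that increasing $d$ (hence $\gamma$) or increasing $l$ only decreases the factor $\tfrac{1}{2}(1+\gamma^{-2l})$.

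**The lower bound.** Here one uses $\eta^{2l} > 0$, so trivially $\tfrac{1}{2}(1+\eta^{2l}) > \tfrac{1}{2}$ and $X_l > \tfrac{1}{2}\gamma^l$; but again the stated constant $\tfrac{1}{1+\sqrt 2} \approx 0.414$ is sharper than $\tfrac12$, so more care is needed. The natural move is to find the configuration minimizing $\tfrac{1}{2}(1+\gamma^{-2l})$: this factor decreases as $\gamma$ grows and as $l$ grows, so the infimum over $\gamma \geq 1+\sqrt 2$ and $l \geq 1$ is approached in the limit $\gamma \to \infty$ or $l\to\infty$, giving $\tfrac12$. This does not immediately match $\tfrac{1}{1+\sqrt 2}$, so the lower bound must instead be derived differently --- most likely by pairing the inequality with the value $X_1 = \tfrac{1}{2}(\gamma + \gamma^{-1}) \geq \tfrac{1}{1+\sqrt 2}\gamma$, and checking $\tfrac{1}{2}(1+\gamma^{-2}) \geq \gamma^{-1}/(1+\sqrt 2)$ directly, then arguing monotonicity in $l$ secures the bound for all $l \geq 1$. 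I expect the lower bound to be the main obstacle, since the constant $\tfrac{1}{1+\sqrt 2}$ is not the naive infimum and one must pin down exactly which extremal case produces it and confirm the inequality holds uniformly.

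**Assembling the estimate.** Once both bounds on the correction factor $\tfrac{1}{2}(1+\eta^{2l})$ are established uniformly in $l \geq 1$ and over all non-square $d > 1$ (equivalently $\gamma \geq 1+\sqrt 2$), multiplying through by $\gamma^l$ yields \eqref{eq6} directly. The whole argument is elementary, relying only on the Binet formula, the identity $\eta = \gamma^{-1}$, and monotonicity of the elementary function $\gamma \mapsto \tfrac{1}{2}(1+\gamma^{-2l})$; no linear forms in logarithms are needed for this preliminary lemma.
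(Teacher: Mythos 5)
The paper does not actually prove this lemma --- its ``proof'' is a citation to Lemma 1 of \cite{EGL2020} --- so your plan of arguing directly from the Binet formula, writing $X_l = \tfrac12\gamma^l\bigl(1+\eta^{2l}\bigr)$ with $\eta=\gamma^{-1}$ and bounding the correction factor, is the natural route and its skeleton is sound. However, the execution has two genuine errors. First, your upper bound rests on the claim that the smallest fundamental solution is $\gamma=1+\sqrt2$, attained at $d=2$. That is false: $1+\sqrt2$ solves the \emph{negative} Pell equation $x^2-2y^2=-1$, whereas the fundamental solution of $X^2-2Y^2=1$ is $3+2\sqrt2$, and the true minimum of $\gamma$ over all non-square $d>1$ is $2+\sqrt3$, attained at $d=3$ (since $X_1\geq 2$ forces $\gamma = X_1+\sqrt{X_1^2-1}\geq 2+\sqrt3$). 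Your computation survives only because all you actually need is the inequality $\gamma\geq 1+\sqrt2$, which follows a fortiori from $\gamma\geq 2+\sqrt3$; with it, $\tfrac12\bigl(1+\gamma^{-2l}\bigr)\leq \tfrac12\bigl(1+(1+\sqrt2)^{-2}\bigr)=2-\sqrt2$ for all $l\geq 1$. The false minimality statement should be replaced by this one-line bound.

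Second, and more seriously as a matter of logic, your discussion of the lower bound inverts the comparison of constants. You assert that $\tfrac{1}{1+\sqrt2}\approx 0.414$ is ``sharper'' than $\tfrac12$ and that therefore extra work is needed; but for a \emph{lower} bound the larger constant is the stronger one, and $\tfrac{1}{1+\sqrt2}=\sqrt2-1<\tfrac12$. Hence the trivial estimate $X_l = \tfrac12\gamma^l\bigl(1+\eta^{2l}\bigr)>\tfrac12\gamma^l>\tfrac{1}{1+\sqrt2}\gamma^l$ already finishes the proof; the ``main obstacle'' you anticipate does not exist, and the hedged alternative derivation you sketch is never actually carried out, leaving your write-up formally incomplete at exactly the point you flagged. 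For context: the odd-looking constant $\tfrac{1}{1+\sqrt2}$ comes from the cited source, which treats $X^2-dY^2=\pm1$; there $\eta=\pm\gamma^{-1}$ may be negative, and in the $-1$ case with $l$ odd one gets $X_l=\tfrac12\gamma^l\bigl(1-\gamma^{-2l}\bigr)\geq \tfrac12\bigl(1-(1+\sqrt2)^{-2}\bigr)\gamma^l=\tfrac{1}{1+\sqrt2}\gamma^l$, with $\gamma=1+\sqrt2$ (at $d=2$) genuinely the minimal unit in that wider setting. In the present paper's setting, where $\eta=\gamma^{-1}>0$, both inequalities of \eqref{eq6} are immediate once $\gamma\geq 1+\sqrt2$ is justified correctly.
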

	\begin{proof}
		See Lemma 1 in \cite{EGL2020}. 
	\end{proof}
	\par
	Let $ \alpha $ be an algebraic number of degree $d$. Then the \textit{ logarithmic height}  of the algebraic number $\alpha$ is given  by
	$$ 
	h(\alpha) = \frac{1}{d} \left( \log |a| + \sum_{i=1}^d \log \max \{ 1, |\alpha^{(i)}| \} \right),
	$$
	where $a$ is the leading coefficient of the minimal polynomial of $\alpha$ over $\mathbb{Z}$ and the $\alpha^{(i)}$'s are the conjugates of $\alpha$ in $\mathbb{C}$. In particular, if $z= p/q$ is a rational number with $\gcd(p, q) =1$, and $q>0$ then $h(z) = \log \max \{|p|, q\}$. 
	
	To prove Theorem \ref{th1}, we use a lower bound of linear forms in logarithms to bound the index $l, z_1, \ldots, z_s$ appearing in \eqref{eq3a}.  In particular, we need the following result due to Matveev \cite{Matveev2000}.
	\begin{lemma}[Matveev's Theorem]\label{lem12}
		Let $\eta_1,\ldots,\eta_t$ be real algebraic numbers and let $d_{1},\ldots, d_{t}$ be rational integers. Let $d_{\mathbb{L}}$ be the degree of the number field $\mathbb{L}=\mathbb{Q}(\eta_1,\ldots,\eta_t)$ over $\mathbb{Q}$. Let $A_{j}$ be real numbers satisfying 
		\[ A_j \geq \max \left\{ d_{\mathbb{L}}h(\eta_j) , |\log \eta_j|, 0.16  \right\}, \quad j= 1, \ldots,t.\]
		Assume that $B\geq \max\{|d_1|, \ldots, |d_{t}|\}$ and $\Lambda:=\eta_{1}^{d_1}\cdots\eta_{t}^{d_t} - 1$. If $\Lambda \neq 0$, then
		\[|\Lambda| \geq \exp \left( -1.4\cdot 30^{t+3}\cdot t^{4.5}\cdot d_{\mathbb{L}}^{2}(1 + \log d_{\mathbb{L}})(1 + \log B)A_{1}\cdots A_{t}\right).\]
	\end{lemma}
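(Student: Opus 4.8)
The plan is to argue by contradiction. Suppose some non-square $d>1$ admits two solutions, say $X_{l_1}=2^{a_1}3^{b_1}$ and $X_{l_2}=2^{a_2}3^{b_2}$ with $a_j\leq b_j$ ($j=1,2$) and $1\leq l_1<l_2$. Using the Binet formula \eqref{eq7a} together with $\eta=\gamma^{-1}$, I would write $2X_{l_j}=\gamma^{l_j}+\gamma^{-l_j}$, so that $2^{a_j+1}3^{b_j}=\gamma^{l_j}+\gamma^{-l_j}$. Dividing by $\gamma^{l_j}$ produces the small linear form $\Lambda_j:=2^{a_j+1}3^{b_j}\gamma^{-l_j}-1=\gamma^{-2l_j}$, which is positive and $<1$; in particular $\Lambda_j\neq 0$, so Lemma \ref{lem12} applies with $\eta_1=2,\eta_2=3,\eta_3=\gamma$, $d_{\mathbb L}=2$, and $h(\gamma)=\tfrac12\log\gamma$ (the minimal polynomial of $\gamma$ being $x^2-2X_1x+1$).

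Since $\log|\Lambda_j|=-2l_j\log\gamma$ while the only relevant height that grows with $d$ is $A_3=\log\gamma$, the factor $\log\gamma$ cancels on both sides and Matveev yields $l_j<c(1+\log B_j)$ for an absolute constant $c$, where $B_j=\max\{a_j+1,b_j,l_j\}$. By Lemma \ref{lem1} one has $b_j\log 3\leq\log X_{l_j}\leq l_j\log\gamma+O(1)$, so $\log B_j\ll \log l_j+\log\log\gamma$. This bounds $l_j$ only in terms of $\log\log\gamma$, and because $\gamma$ (equivalently $d$) is a free parameter this is not yet sufficient — which is precisely where a second solution must be exploited.

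The \emph{crux} is to eliminate $\gamma$. Raising the identity $\gamma^{l_1}+\gamma^{-l_1}=2^{a_1+1}3^{b_1}$ to the power $l_2$, the $j=2$ identity to the power $l_1$, and dividing, I obtain
\[
\Gamma:=2^{(a_1+1)l_2-(a_2+1)l_1}\,3^{b_1l_2-b_2l_1}-1=\frac{(1+\gamma^{-2l_1})^{l_2}}{(1+\gamma^{-2l_2})^{l_1}}-1.
\]
Because $l_1<l_2$ and $\gamma>1$, the right-hand side is positive (so $\Gamma\neq 0$) and $\ll l_2\gamma^{-2l_1}$, whence $\log|\Gamma|<\log l_2-2l_1\log\gamma+O(1)$. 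Now $\Gamma$ is a linear form in the two \emph{fixed} logarithms $\log 2,\log 3$ only, so Lemma \ref{lem12} applies with $t=2$ and $d_{\mathbb L}=1$, and the resulting constant is absolute, carrying no dependence on $\gamma$. With $B'=\max\{|(a_1+1)l_2-(a_2+1)l_1|,\,|b_1l_2-b_2l_1|\}\ll l_2^2\log\gamma$, comparing the two estimates gives $2l_1\log\gamma\ll 1+\log B'+\log l_2\ll \log l_2+\log\log\gamma$, and hence $\log\gamma\ll \log l_2$.

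Feeding $\log\gamma\ll\log l_2$ back into the inequality of the second paragraph turns $\log\log\gamma$ into $O(\log\log l_2)$ and yields $l_2\ll 1+\log l_2+\log\log l_2$, an absolute, effectively computable bound on $l_2$; this in turn bounds $l_1$, $\log\gamma$ (hence $d$), and all exponents $a_j,b_j$. It then remains to lower these necessarily enormous Baker-type bounds to a searchable range by the reduction method — applying, for instance, the Baker--Davenport lemma (or an LLL reduction) to the forms $\Lambda_j$ and $\Gamma$ — and to check the finitely many surviving triples $(d,l_1,l_2)$ directly, concluding that none produces two representations of the form \eqref{eq3b}. The main obstacle is organizing the elimination in the third paragraph so that the second Matveev constant is genuinely independent of $d$, and then disentangling the mutual, but only logarithmic, dependence between the bounds on $\gamma$ and on $l_2$; once the ranges are reduced, the terminal verification is routine.
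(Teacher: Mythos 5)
You were asked to prove Lemma \ref{lem12}, i.e.\ Matveev's lower bound for linear forms in logarithms: if $\Lambda=\eta_1^{d_1}\cdots\eta_t^{d_t}-1\neq 0$, then $|\Lambda|\geq \exp\left(-1.4\cdot 30^{t+3}t^{4.5}d_{\mathbb{L}}^2(1+\log d_{\mathbb{L}})(1+\log B)A_1\cdots A_t\right)$. Your proposal never addresses this inequality at all. What you have written is a sketch of Theorem \ref{th2} (nonexistence of a $d$ with two solutions of $X_l=2^{n_1}3^{n_2}$), and --- fatally for the task at hand --- it proceeds by \emph{invoking} Lemma \ref{lem12} twice: once with $t=3$, $d_{\mathbb{L}}=2$ for the forms $\Lambda_j$, and once with $t=2$, $d_{\mathbb{L}}=1$ for the eliminated form $\Gamma$. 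As a proof of Lemma \ref{lem12} this is circular: every quantitative step in your argument rests on exactly the estimate you were supposed to establish, and nothing in your text engages with general algebraic numbers $\eta_1,\ldots,\eta_t$, arbitrary $t$, the heights $A_j$, or the shape of the constant.

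For the record, the paper itself does not prove this lemma either; it is quoted as a known deep result, with the proof deferred to \cite{Matveev2000}. An actual proof belongs to Baker's method in transcendence theory: one builds an auxiliary function or interpolation determinant in the $t$ logarithms, extrapolates its vanishing, and applies a multiplicity (zero) estimate on an algebraic group to contradict the assumption that $\Lambda$ is nonzero yet extremely small; the explicit constant $1.4\cdot 30^{t+3}t^{4.5}$ is the outcome of Matveev's optimization of that machinery. None of this can be replaced by manipulations of Pell-equation identities in the three specific numbers $2$, $3$, $\gamma$. (Taken instead as a blind outline of Theorem \ref{th2}, your elimination of $\log\gamma$ via the quotient $\gamma^{l_1 l_2}$-cancellation is essentially the paper's own device --- compare $\Gamma_2:=l_1\Gamma_1^{(2)}-l_2\Gamma_1^{(1)}$ in \eqref{eq0023} --- but that observation does not rescue the proposal as a proof of the stated lemma.)
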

	
	
	\begin{lemma}\label{lemma3.3}
		Let $B$ be a non-negative integer such that $\delta B \leq \alpha \log B+ \beta$. If $\alpha \geq e \delta$, then 
		$$B \leq (2/{\delta})(\alpha \log(\alpha/{\delta})+\beta).$$
	\end{lemma}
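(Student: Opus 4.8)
The plan is to strip away the constant $\delta$ and reduce everything to a single real-analysis estimate built on the elementary inequality $\log x \le x-1$. Since $\delta>0$, I would first divide the hypothesis $\delta B \le \alpha\log B+\beta$ through by $\delta$ and set $A:=\alpha/\delta$ and $C:=\beta/\delta$, so that the assumption becomes $B\le A\log B+C$ with $A\ge e$, and the desired conclusion becomes exactly $B\le 2\,(A\log A+C)$. The tiny cases are immediate: $B=0$ cannot satisfy the hypothesis (its right-hand side is $-\infty$), while for $B=1$ the hypothesis forces $\beta\ge\delta$, i.e.\ $C\ge 1$, whence the conclusion is clear. So I may assume $B\ge 2$, in particular $\log B>0$.

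For the main range the key trick is to compare $B$ against the point $2A$. Applying $\log x\le x-1$ with $x=B/(2A)>0$ and rearranging gives
\[
\log B \le \frac{B}{2A}+\log(2A)-1 .
\]
The whole point of the comparison point $2A$ is that, after multiplying by $A$, the resulting linear term in $B$ carries the coefficient $1/2$ and can be absorbed on the left. Indeed, feeding this bound into $B\le A\log B+C$ yields
\[
B \le \frac{B}{2}+A\log A+A(\log 2-1)+C ,
\]
and since $\log 2-1<0$ the middle term is negative and may be discarded, leaving $B/2\le A\log A+C$. Unwinding the definitions of $A$ and $C$ turns this into $B\le (2/\delta)\,(\alpha\log(\alpha/\delta)+\beta)$, which is the claim.

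I do not expect a genuine obstacle here; the only real choice is the comparison point in $\log x\le x-1$, and the value $2A$ is precisely what produces the stated constant $2$. The hypothesis $\alpha\ge e\delta$ (that is, $A\ge e$, so $\log A\ge 1$) is used only lightly: it guarantees that the small-$B$ analysis is trivial and that the final bound $2(A\log A+C)$ is positive and hence meaningful, while the core estimate itself needs nothing beyond $\delta>0$. This lemma carries no arithmetic content and requires none of the linear-forms-in-logarithms machinery of Lemma \ref{lem12}; its role later is simply to convert the transcendental upper bounds produced by Matveev's theorem into an explicit numerical bound on the index $l$.
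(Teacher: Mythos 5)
Your proof is correct. The paper itself gives no argument at all for this lemma --- it simply cites Cohen's book (Lemma 12.2.4 of \emph{Number Theory, Volume II}) --- so your self-contained derivation is a genuine addition rather than a rediscovery of the paper's proof. Your route is the natural elementary one: normalize by $\delta$ (so $A=\alpha/\delta$, $C=\beta/\delta$), apply $\log x\le x-1$ at the well-chosen comparison point $x=B/(2A)$ so that the linear term appears with coefficient $1/2$ and can be absorbed, and discard the negative term $A(\log 2-1)$. All steps check out, including the unwinding $2(A\log A+C)=(2/\delta)(\alpha\log(\alpha/\delta)+\beta)$. Two small remarks. First, you implicitly assume $\delta>0$; this is not stated in the lemma but is clearly intended (and is how the lemma is used in the paper, with $\delta=1$ or positive constants), so it is worth flagging as a hypothesis you are adding. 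Second, your observation that $\alpha\ge e\delta$ is used ``only lightly'' is in fact an understatement: your absorption argument never uses it --- $A(\log 2-1)<0$ holds for every $A>0$, and even the edge cases $B\in\{0,1\}$ go through without it --- so your argument actually proves a slightly stronger statement than the one quoted, which is a mild bonus of the elementary approach over the black-box citation.
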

	\begin{proof}
		See \cite[Lemma 12.2.4]{cohen2007}
	\end{proof}
	We need some more results from the theory of continued fractions. The following result will be  useful for the treatment of linear forms in logarithms.
	\begin{lemma}\label{lemma3.4}
		Let $\tau$ be an irrational number, $M$ be a positive integer  and $p_0/q_0, p_1/q_1,\ldots,$ be all the convergents of the continued fraction $[a_0,a_1,\ldots]$ of $\tau$. Let $N$ be such that $q_N>M$. Then putting $a(M):=\max\{a_t:t=0,1,\ldots, N\},$ the inequality 
		$$|m\tau-n|>\dfrac{1}{(a(M)+2)m}$$ holds for all pairs $(n, m)$ of integers with $0<m<M$.
	\end{lemma}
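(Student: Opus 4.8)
The plan is to compare the arbitrary linear form $|m\tau-n|$ with the approximation error of a suitable convergent, and then to estimate that error explicitly in terms of the partial quotients. First I would fix a pair $(n,m)$ of integers with $0<m<M$ and let $k$ be the largest index with $q_k\le m$. Since $q_N>M>m$ we have $k\le N-1$, hence $k+1\le N$ and therefore $a_{k+1}\le a(M)$; by maximality of $k$ we also have $m<q_{k+1}$, while $q_k\le m$.

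The key reduction is the law of best approximation of the second kind for continued fractions: for every integer $q$ with $1\le q<q_{k+1}$ and every integer $p$ one has $|q\tau-p|\ge |q_k\tau-p_k|$. Applying this with $q=m$ (permissible since $m<q_{k+1}$) gives
\[
|m\tau-n|\ \ge\ |q_k\tau-p_k|,
\]
so it suffices to bound $|q_k\tau-p_k|$ from below by $1/((a(M)+2)m)$.

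For the latter I would use the exact formula $|q_k\tau-p_k|=(\tau_{k+1}q_k+q_{k-1})^{-1}$, where $\tau_{k+1}=[a_{k+1};a_{k+2},\ldots]$ is the $(k+1)$-st complete quotient; this drops out of the representation $\tau=(\tau_{k+1}p_k+p_{k-1})/(\tau_{k+1}q_k+q_{k-1})$ together with $|p_kq_{k-1}-p_{k-1}q_k|=1$. Because $\tau$ is irrational its expansion is infinite, so $\tau_{k+1}<a_{k+1}+1$ strictly; using in addition $q_{k-1}\le q_k$, $a_{k+1}\le a(M)$ and $q_k\le m$ I obtain
\[
\tau_{k+1}q_k+q_{k-1}\ <\ (a_{k+1}+1)q_k+q_k\ =\ (a_{k+1}+2)q_k\ \le\ (a(M)+2)\,m .
\]
Taking reciprocals yields $|q_k\tau-p_k|>1/((a(M)+2)m)$, and combining this with the displayed inequality from the previous paragraph completes the argument.

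The only genuinely non-routine point is the best-approximation step, which must be applied with a little care at the initial index $k=0$ (where the nearest-integer normalisation of $p_0$ matters, but the corresponding range $1\le m<q_1$ is nonempty only when $a_1\ge 2$, and then $|q_0\tau-p_0|$ is itself the distance from $\tau$ to the nearest integer). I would therefore either quote the second-kind best-approximation theorem directly (Hardy--Wright, Khinchin) or derive $|m\tau-n|\ge|q_k\tau-p_k|$ from the unimodularity of consecutive convergents; the remaining convergent algebra is entirely standard.
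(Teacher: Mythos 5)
Your proof is correct. There is, however, nothing in the paper to compare it against: the paper states this lemma with no proof and no citation (unlike the neighbouring lemmas, which are at least referenced to \cite{cohen2007} and \cite{Dujella1998}), treating it as a known fact from continued fraction theory. Your argument is the standard derivation of that fact, and every step checks out: taking $k$ to be the largest index with $q_k\le m$ gives $k+1\le N$ (hence $a_{k+1}\le a(M)$) and $m<q_{k+1}$; the best-approximation property of convergents reduces the problem to bounding $|q_k\tau-p_k|$ from below; and the identity $|q_k\tau-p_k|=(\tau_{k+1}q_k+q_{k-1})^{-1}$ combined with $\tau_{k+1}<a_{k+1}+1$ (strict, since $\tau$ is irrational), $q_{k-1}\le q_k$ and $q_k\le m$ yields the strict bound $|q_k\tau-p_k|>1/\bigl((a(M)+2)m\bigr)$, which propagates through the weak inequality of the best-approximation step to give the strict inequality claimed. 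Your caution about the index $k=0$ is warranted but, as you observe, harmless: either $a_1=1$, in which case the range $1\le q<q_1$ is empty, or $a_1\ge 2$, in which case $p_0=a_0$ is the nearest integer to $\tau$; alternatively, the unimodular-decomposition proof of the best-approximation property (writing $(p,q)$ as an integer combination of $(p_k,q_k)$ and $(p_{k+1},q_{k+1})$ and using that $q_k\tau-p_k$ and $q_{k+1}\tau-p_{k+1}$ have opposite signs) works uniformly for all $k\ge 0$ when $\tau$ is irrational, so no case distinction is actually needed.
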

	After getting the upper bound of $n_2$, which is generally too large, the next step is to reduce it. For this reduction purpose, we use a variant of the Baker–Davenport result \cite{Baker1969}. Here, for a real number $x$, let $||x||:=\min\{|x-n|: n\in \mathbb{Z}\}$ denote the distance from $x$ to the nearest integer.
	
	\begin{lemma}[See \cite{Dujella1998}]\label{lemma3.5}
		Let $\tau$ be an irrational number, $M$ be a positive integer  and $p/q$ be a convergent of the continued fraction of $\tau$ such that $q>6M$. Let $A, B, \mu $ be some real numbers with $A>0$ and $B>1$. Put $\epsilon_1:=||\mu q||-M||\tau q||$, then there is no solution to the inequality 
		$$|m\tau-n+\mu|<AB^{-k},$$ in positive integers $m,n$ and $k$ with
		$$m\leq M \qquad \text{ and } \qquad k\geq \dfrac{\log(Aq/{\epsilon_1})}{\log B}.$$
	\end{lemma}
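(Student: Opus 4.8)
The plan is to argue by contradiction, using that $p/q$ is a convergent of $\tau$ to convert the inhomogeneous approximation $|m\tau-n+\mu|$ into a quantity controlled by $||\mu q||$ and $||\tau q||$. Throughout I assume $\epsilon_1>0$, which is the implicit standing hypothesis that makes $\log(Aq/\epsilon_1)$ meaningful; when $\epsilon_1\leq 0$ the stated bound on $k$ is empty and there is nothing to prove.

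Suppose, for contradiction, that positive integers $m,n,k$ satisfy $m\leq M$, $|m\tau-n+\mu|<AB^{-k}$, and $k\geq \log(Aq/\epsilon_1)/\log B$. First I would multiply the target quantity by $q$ and split off integer parts. Writing $q\tau=p+\theta$ with $\theta:=q\tau-p$, one has $|\theta|=||\tau q||$ since $p$ is the nearest integer to $q\tau$ (a defining property of convergents, valid here because $q>6M$ forces a high-index convergent). Writing $q\mu=i+\delta$ with $i$ the nearest integer to $q\mu$ and $|\delta|=||\mu q||$, I would obtain
$$q(m\tau-n+\mu)=(mp-qn+i)+(m\theta+\delta)=:I+(m\theta+\delta),$$
where $I\in\Z$, $|m\theta|\leq M\,||\tau q||$ and $|\delta|=||\mu q||$.

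The core step is the lower bound $|q(m\tau-n+\mu)|\geq \epsilon_1$, obtained by a two-case analysis on $I$. If $I=0$, then since $\epsilon_1>0$ forces $|m\theta|\leq M\,||\tau q||<||\mu q||=|\delta|$, the triangle inequality gives $|m\theta+\delta|\geq |\delta|-|m\theta|\geq ||\mu q||-M\,||\tau q||=\epsilon_1$. If $I\neq 0$, so $|I|\geq 1$, then $|I+(m\theta+\delta)|\geq 1-|m\theta|-|\delta|\geq 1-M\,||\tau q||-||\mu q||$, and since $||\mu q||\leq 1/2$ one checks $1-M\,||\tau q||-||\mu q||\geq ||\mu q||-M\,||\tau q||=\epsilon_1$. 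Here the hypothesis $q>6M$ enters decisively: together with the convergent estimate $||\tau q||<1/q$ it yields $M\,||\tau q||<1/6$, which keeps $|m\theta+\delta|<1$ so that the nonzero integer $I$ genuinely dominates the fractional contribution. Dividing by $q$ then gives $|m\tau-n+\mu|\geq \epsilon_1/q$.

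Finally I would combine the two bounds: $\epsilon_1/q\leq |m\tau-n+\mu|<AB^{-k}$ yields $B^{k}<Aq/\epsilon_1$, hence $k<\log(Aq/\epsilon_1)/\log B$, contradicting the assumed lower bound on $k$. I expect the main obstacle to be the uniform treatment of the two cases for $I$: the first case uses $\epsilon_1>0$ to ensure $\delta$ dominates $m\theta$, while the second relies on both $||\mu q||\leq 1/2$ and the smallness of $M\,||\tau q||$ guaranteed by $q>6M$. Recognizing that these two different mechanisms both land exactly on the common threshold $\epsilon_1$ is the crux of the argument.
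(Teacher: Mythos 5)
Your proof is correct. Note that the paper itself does not prove this lemma at all---it is quoted directly from Dujella and Peth\H{o} \cite{Dujella1998}---and your argument is essentially the standard proof from that source: multiply $|m\tau-n+\mu|$ by $q$, replace $q\tau$ by the convergent numerator $p$ at the cost of $m|q\tau-p|\leq M\|\tau q\|$, bound the remainder below by $\epsilon_1$, and conclude $\epsilon_1\leq q|m\tau-n+\mu|<qAB^{-k}$, hence $k<\log(Aq/\epsilon_1)/\log B$, a contradiction. Two small remarks on your execution. First, the two-case analysis on $I$ is avoidable: since $mp-nq\in\Z$, the definition of $\|\cdot\|$ gives directly $|(mp-nq)+\mu q|\geq\|\mu q\|$, so
\begin{equation*}
q|m\tau-n+\mu|\;\geq\;|(mp-nq)+\mu q|-m|q\tau-p|\;\geq\;\|\mu q\|-M\|\tau q\|=\epsilon_1
\end{equation*}
in one step; your case $I\neq 0$ in fact uses only $\|\mu q\|\leq 1/2$ (your inequality $1-M\|\tau q\|-\|\mu q\|\geq\|\mu q\|-M\|\tau q\|$ is equivalent to $2\|\mu q\|\leq 1$), so the claim that $q>6M$ is ``decisive'' there is a slight misattribution. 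The genuine role of $q>6M$ in the proof is to force $q\geq 7$, whence $|q\tau-p|<1/q<1/2$ and therefore $|q\tau-p|=\|\tau q\|$, the identification you invoke; beyond that it serves only the practical purpose of making $\epsilon_1>0$ plausible in applications, and you correctly flag $\epsilon_1>0$ as the implicit standing hypothesis under which the conclusion is meaningful.
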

	
	\section{Proof of Theorem \ref{th1}}
	Suppose $1 \leq l_1 < l_2$, such that
	\begin{equation}\label{eq 14}
		X_{l_1}= p_1^{a_{11}}p_2^{a_{12}}\cdots p_s^{a_{1s}}+\cdots+ p_1^{a_{r1}}p_2^{a_{r2}}\cdots  p_s^{a_{rs}}$$ and 
		$$ X_{l_2}= p_1^{b_{11}}p_2^{b_{12}}\cdots p_s^{b_{1s}}+\cdots+ p_1^{b_{r1}}p_2^{b_{r2}}\cdots  p_s^{b_{rs}}.
	\end{equation}
	Denote 
	\begin{align*}
		(l, n_{11},\ldots, n_{1s},n_{21},&\ldots, n_{2s}, \ldots, n_{r1},\ldots, n_{rs})\\
		&  =(l_1,a_{11},\ldots, a_{1s}, a_{21},\ldots a_{2s},\ldots, a_{r1},\ldots, a_{rs}) 
	\end{align*}
	or
	\begin{align*}
		(l, n_{11},\ldots, n_{1s},n_{21},&\ldots, n_{2s}, \ldots, n_{r1},\ldots, n_{rs})\\
		&  =(l_2, b_{11},\ldots, b_{1s}, b_{21},\ldots b_{2s},\ldots, b_{r1},\ldots, b_{rs}).
	\end{align*}
	By Lemma \ref{lem1},
	\begin{equation}\label{eq31}
		\dfrac{\gamma ^l}{2.5}<X_l<\gamma^l.
	\end{equation}
	From \eqref{eq3d} and \eqref{eq3c}, it follows that,
	\begin{equation}\label{eq032}
		p_s^{n_{rs}}< X_l< r  p_s^{s n_{rs}}.
	\end{equation}
	As $(X_l)_{l\geq 1}$ is an increasing sequence,
	\[p_s^{a_{rs}}\leq X_{l_1} \leq X_{l_2}\leq r p_s^{s b_{rs}}\] and this implies
	\begin{equation}\label{eq107}
		a_{rs}\leq s b_{rs}+\dfrac{\log r}{\log p_s}.
	\end{equation}
	From (\ref{eq31}) and (\ref{eq032}) we get
	$$p_s^{n_{rs}}< \gamma^{l}< 2.5X_l< 2.5 r p_s^{s n_{rs}},$$ and this further implies,
	\begin{equation}\label{eq108}
		n_{rs}\log p_s<l \log \gamma < s n_{rs}\log p_s+\log 2.5 r.
	\end{equation} Thus,
	\begin{equation}\label{eq109}
		\dfrac{n_{rs}}{\log \gamma}<\dfrac{l}{\log p_s}<l.
	\end{equation}
	Since for $ \epsilon>0, \;|z_i|^{1+\epsilon}<|z_r|, \text{ for }$ $i=1,\ldots, r-1$, then from \eqref{eq3a} it follows that 
	\begin{equation}\label{eqproof19}
		|X_l| =|z_1+\cdots + z_r| \leq (r-1)|z_r|^{\frac{1}{1+\epsilon}}+|z_r| \leq r |z_r|,
	\end{equation} which implies
	\begin{equation}\label{eqproof22}
		|z_r| \geq \dfrac{|X_l|}{r}.
	\end{equation}	
	If $|X_l| \leq \dfrac{\gamma^{(n_{rs}/{\log \gamma})}}{4}$, then 
	$$\frac{1}{2}\gamma^l<|X_l|=\frac{1}{2}\gamma^l+\frac{1}{2}\eta^l \leq \dfrac{\gamma^{(n_{rs}/{\log \gamma})}}{4}<\dfrac{\gamma^l}{4}$$
	i.e.,
	$$\frac{1}{2}\gamma^l<\dfrac{\gamma^l}{4},$$
	which is not true. So assume that, 
	\begin{equation}\label{eqproof23}
		|X_l|>\dfrac{\gamma^{(n_{rs}/{\log \gamma})}}{4}.
	\end{equation} 
	We rewrite (\ref{eq3a}) using (\ref{eq7a}) as 
	\begin{equation}
		\dfrac{1}{2} \gamma^l-z_r=z_1+\cdots +z_{r-1}-\dfrac{1}{2} \eta^l.
	\end{equation}
	Dividing throughout by $z_r$,
	\begin{equation} \label{eq043}
		\left|2^{-1}\gamma^l z_r^{-1}-1\right| \leq \left|\dfrac{\sum_{i=1}^{r-1}z_i}{z_r}\right| +\left|\dfrac{\eta^l}{2z_r} \right|. 
	\end{equation}
	Next we will find bounds for each term in the right hand side of (\ref{eq043}). \\From (\ref{eqproof22}) and (\ref{eqproof23}),
	\begin{align*}
		\dfrac{\sum_{i=1}^{r-1}|z_i|}{|z_r|}&\leq  (r-1)\dfrac{|z_r|^{\frac{1}{1+\epsilon }}}{|z_r|} \leq (r-1)\dfrac{1}{|z_r|^{\frac{\epsilon}{1+\epsilon }}} \\& \leq (r-1)\dfrac{(4r)^{\frac{\epsilon}{1+\epsilon }}}{\gamma^{\frac{\left({n_{rs}/\log \gamma} \right)\epsilon}{1+\epsilon }}} \leq c_1(s) \dfrac{1}{\gamma^{\frac{\left({n_{rs}/\log \gamma} \right)\epsilon}{1+\epsilon }}},
	\end{align*} where $c_1(s)=(r-1)(4r)^{\frac{\epsilon}{1+\epsilon }}$. To find a bound for $\left|\dfrac{\eta^l}{2z_r} \right| $, 
	$$\left|\dfrac{\eta^l}{2z_r} \right| <\frac{1}{2}\cdot \frac{4r\eta^l}{\gamma^{(n_{rs}/{\log \gamma})}}< 2r\cdot \frac{\eta^l}{\gamma^{(n_{rs}/{\log \gamma})}}<2r \cdot \left(\frac{\eta}{\gamma}\right)^{n_{rs}/{\log \gamma}}.$$
	Substituting these bounds in (\ref{eq043}),
	\begin{align*}
		\left|2^{-1}\gamma^l z_r^{-1}-1\right| &\leq c_1(s) \dfrac{1}{\gamma^{\frac{\left({n_{rs}/\log \gamma} \right)\epsilon}{1+\epsilon }}}+2r \cdot \left(\frac{\eta}{\gamma}\right)^{n_{rs}/{\log \gamma}}\\& \leq c_2(s) \cdot \max \left\{{\dfrac{1}{\gamma^{\frac{\epsilon}{1+\epsilon }}},\frac{\eta}{\gamma}}\right\}^{n_{rs}/{\log \gamma}}.\numberthis{\label{eq044}}
	\end{align*}
	We know that, if $|x-1|<\frac{1}{2}, \text{ then } |\log x|<2|x-1|$. Set
	\[\Lambda:=2^{-1}\gamma^l z_r^{-1}-1 \quad \mbox{and}\quad \Gamma :=-\log 2+l\log \gamma-\log z_r.\] Using the fact that $z_r= p_{1}^{n_{r1}}\cdots p_{s}^{n_{rs}}$,
	$$|\Lambda|=\left|\frac{1}{2}\gamma^l p_1^{-n_{r1}}\cdots p_s^{-n_rs}-1 \right|.$$ If  $|\Lambda|=0$, then  $\gamma^l=2z_r$ is an integer, which is false for any $l\geq 1$. Hence $|\Lambda| \neq 0$.
	So applying Matveev's theorem (Lemma \ref{lem12}), with
	$$(\eta_{1},d_1)=(2,-1),~~~ (\eta_{2},d_2)=(\gamma,l),~~~\text{ and } (\eta_{i+2},d_{i+2})=(p_i,-n_{ri})\text{ with $i=1,\ldots, s.$}$$
	Furthermore,
	$$d_{\mathbb{L}}=2, \; h(2)=\log 2, \quad h(\gamma)=\frac{1}{2}\log \gamma, \quad h(p_j)=\log p_j, \text{where}\; j=1,\ldots ,s.$$
	We choose $A_i$'s as follows,
	$$A_1=0.16, \qquad A_2=\log\gamma, \qquad A_{i+2}=2 \log p_i, \text{with}\; i=1,\ldots, s.$$
	Since $p_s$ is odd, without loss of generality we assume that $\min\{\gamma, n_{rs}\}\geq 2.5 r  p_s>7$. From (\ref{eq108}), $$l<\dfrac{s n_{rs}\log p_s+\log2.5 r }{\log \gamma}.$$  Now if $\min\{\gamma,n_{rs}\}=\gamma$, then $2.5 r p_s<\gamma $, which implies that, $\log p_s< \log \gamma -\log {2.5r}$. Substituting this in above inequality , 
	\begin{align*}
		l&<\dfrac{s n_{rs}(\log \gamma -\log {2.5r})+\log2.5 r }{\log \gamma}<sn_{rs}.
	\end{align*} If $\min\{\gamma,n_{rs}\}= n_{rs}$, then $n_{rs}<\gamma$, so as in the previous case  $l<sn_{rs}$. Hence, in both cases $l<sn_{rs}$. So we can take $B=sn_{rs}$. Applying Matveev's theorem with $t=s+2$, 
	\begin{align*}
		\log|\Lambda|&>-1.4\cdot 30^{s+5}(s+2)^{4.5} 2^2 (1+\log2)(1+\log (s n_{rs}))\cdot \\& \qquad 0.16  (\log \gamma)(2\log p_1)\cdots(2\log p_s)\\ &>-c_3(s)(\log n_{rs}) (\log\gamma).\numberthis\label{eqproof27}
	\end{align*} 
	It is clear that, $\gamma^{\frac{\epsilon}{1+\epsilon}}<\gamma^2$  for $0< \epsilon <1$.
	Now taking logarithm on both sides of \eqref{eq044} and comparing with \eqref{eqproof27},
	$$-c_3(s)(\log n_{rs}) (\log\gamma)<\log c_2(s)-\left(\dfrac{\epsilon n_{rs}}{(1+\epsilon)\log \gamma }\right)\log \gamma,$$ which further gives
	\begin{align*}
		n_{rs}&<\left(\dfrac{1+\epsilon}{\epsilon}\right)\Big(\log c_2(s)+c_3(s)(\log n_{rs}) (\log\gamma)\Big)\\&\leq c_4(s)(\log n_{rs}) (\log\gamma) .\numberthis\label{eq045}
	\end{align*}
	This gives an upper bound for $n_{rs}$ in terms of $\log \gamma$.
	For $i=1,2$ let
	\begin{align*}
		\Gamma_1^{(i)}&:=-\log 2+l_{i}\log \gamma-\log (p_1^{m_{r1}}\cdots p_s^{m_{rs}})\\&=-\log 2+l_i\log \gamma-\sum_{k=1}^{s}m_{rk}\log p_k.
	\end{align*} Note that $m_{rk}=a_{rk} \text{ or } b_{rk} \text{ for $k=1,\ldots, s$}$. Next our aim is to eliminate term involving $\log \gamma$ from $	\Gamma_1^{(i)}$. For that, consider 
	\begin{equation}\label{eq22}
		\Gamma_2:=l_1\Gamma_1^{(2)}-l_2\Gamma_1^{(1)}=(l_2-l_1)\log2+(l_2-l_1)\sum_{k=1}^{s}m_{rk}\log p_k.
	\end{equation}
	Since $l_1<l_2$, then from \eqref{eq044},
	\begin{align}
		\begin{split}\label{eq23}
			|\Gamma_2| &\leq  l_1 |\Gamma_1^{(2)}|+l_2|\Gamma_1^{(1)}|	\\& \leq l_2c_2(s)\cdot \max\left\{ {\dfrac{1}{\gamma^{\frac{\epsilon}{1+\epsilon }}},\frac{\eta}{\gamma}}\right\}^{b_{rs}/{\log \gamma}}+l_2c_2(s)\cdot \max\left\{ {\dfrac{1}{\gamma^{\frac{\epsilon}{1+\epsilon }}},\frac{\eta}{\gamma}}\right\}^{a_{rs}/{\log \gamma}}\\& \leq  l_2c_2(s)\cdot \max\left\{ {\dfrac{1}{\gamma^{\frac{\epsilon}{1+\epsilon }}},\frac{\eta}{\gamma}}\right\}^{a_{rs}/{\log \gamma}}\left(1+\max\left\{ {\dfrac{1}{\gamma^{\frac{\epsilon}{1+\epsilon }}},\frac{\eta}{\gamma}}\right\}^{{b_{rs}/{\log \gamma}-{a_{rs}/{\log \gamma}}}}\right)\\&\leq l_2c_5(s)\cdot \max\left\{ {\dfrac{1}{\gamma^{\frac{\epsilon}{1+\epsilon }}},\frac{\eta}{\gamma}}\right\}^{a_{rs}/{\log \gamma}}. 
		\end{split}
	\end{align} Assume that, $c_5(s)\cdot l_2\max\left\{ {\dfrac{1}{\gamma^{\frac{\epsilon}{1+\epsilon }}},\frac{\eta}{\gamma}}\right\}^{a_{rs}/{\log \gamma}}<1/2$. Otherwise, we can derive a bound which is weaker than the bound with this assumption. Set
	\begin{align*}
		|\Lambda_2|&:=|2^{(l_2-l_1)}p_1^{n_{r1}(l_2-l_1)}\cdots p_s^{n_{rs}(l_2-l_1)}-1|\\&=|e^{\Gamma_2}-1|<2|\Gamma_2|< 2c_5(s)\cdot l_2\max\left\{ {\dfrac{1}{\gamma^{\frac{\epsilon}{1+\epsilon }}},\frac{\eta}{\gamma}}\right\}^{a_{rs}/\log \gamma}\numberthis{\label{eqproof31}}.
	\end{align*}
	We will apply Matveev's theorem for $\Lambda_2$ with
	$$(\eta_{1},d_1):=(2,l_2-l_1), \qquad (\eta_{i+1},d_{i+1}):=(p_i, n_{ri}(l_2-l_1)),\text{ where $i=1,\ldots ,s.$ }$$ Again,
	$d_{\mathbb{L}}=1, \; h(2)=\log 2, \quad  h(p_i)=\log p_i, \text{ with $i=1,\ldots ,s$ }$. Choose 
	$$A_1=2\log2, \qquad A_{i+1}=2\log p_i, \text{ with $i=1,\ldots ,s$ }.$$
	Note that $|n_{ri}(l_2-l_1)| \leq n_{rs}(l_2+l_1)$ and $l_2<s n_{rs}$. Here $B\geq \max\limits_{1\leq i \leq s}\{|l_2-l_1|,|n_{ri}(l_2-l_1)|\}$. So, we can set $B=2s(n_{rs})^2$.
	Applying Matveev's theorem,
	\begin{align*}
		\log|\Lambda_2|&>-1.4\cdot 30^{s+4}\cdot{(s+1)}^{4.5} (1+\log (2s(n_{rs})^2))(2\log 2)\\& \qquad (2\log p_1)\cdots (2\log p_s)\\ &>-c_6(s)\log n_{rs} .
	\end{align*} 
	We have, $\gamma^{\frac{\epsilon}{1+\epsilon}}<\gamma^2$  for $0< \epsilon <1$.  Now comparing the above inequality with \eqref{eqproof31}, we get
	\begin{align*}
		-c_6(s)\log n_{rs}&<\log (2l_2c_5(s))-\frac{a_{rs}}{\log \gamma} \log \max \left\{\gamma^{\frac{\epsilon}{1+\epsilon}}, \gamma^2\right\}\\& \leq \log (2l_2c_5(s))-\frac{a_{rs}}{\log \gamma} \log  \gamma^{\frac{\epsilon}{1+\epsilon}}.
	\end{align*}
	i.e., $$\frac{a_{rs}}{\log \gamma}\cdot \dfrac{\epsilon}{1+\epsilon}\log \gamma < c_6(s)\log n_{rs}+\log (2l_2c_5(s)),$$ which implies that,
	\begin{align*}
		a_{rs}&<\left(\dfrac{1+\epsilon}{\epsilon}\right)\left(\log (2l_2 c_5(s))+c_6(s) \log n_{rs}\right).
		\numberthis \label{eq049}
	\end{align*}
	\newline Considering \eqref{eq045} for $n_{rs}=b_{rs}$,
	$$  b_{rs}<c_4(s)(\log b_{rs}) (\log\gamma).$$
	By Lemma \ref{lemma3.3}, 
	\begin{equation}\label{eq27}
		b_{rs}<2(c_4(s) \log\gamma \cdot \log (c_4(s) \log\gamma)).
	\end{equation}
	Using \eqref{eq108} for $n_{rs}=a_{rs}$ we get,
	$$	a_{rs}\log p_s<l_1 \log \gamma< s a_{rs}\log p_s+\log2.5 r $$ and this implies,
	\begin{equation}\label{eq28}
		\log \gamma<l_1 \log \gamma< s a_{rs}\log p_s+\log2.5 r.
	\end{equation}
	Since $\sqrt{d-1}\leq X_{l_1}\leq rp_s^{sa_{rs}}$, so assume that $\log\gamma< 2a_{rs} s\log p_s$. Then from (\ref{eq27}),
	\begin{equation}\label{eq29}
		b_{rs}<2(c_4(s) 2a_{rs} s\log p_s \cdot \log (c_4(s) 2a_{rs} s\log p_s)).
	\end{equation}
	Since $l_2<sb_{rs}$, so from \eqref{eq049} and \eqref{eq29} we infer,
	\begin{align*}
		a_{rs}&<\left(\dfrac{1+\epsilon}{\epsilon}\right)\Big(\log (2sb_{rs} c_5(s))+c_6(s) \log b_{rs}\Big)
		\\&< \left(\dfrac{1+\epsilon}{\epsilon}\right) \Big( \log (2s2(c_4(s) 2a_{rs} s\log p_s \cdot \log (c_4(s) 2a_{rs} s\log p_s)) c_5(s)) \Big)\\& \qquad+ \left(\dfrac{1+\epsilon}{\epsilon}\right) \Big(c_6(s) \log (2(c_4(s) 2a_{rs} s\log p_s \cdot \log (c_4(s) 2a_{rs} s\log p_s)) )\Big).
	\end{align*}
	By Lemma \ref{lemma3.3} ,
	$$a_{rs}<c_7(s).$$ Thus \eqref{eq28} implies that,
	\begin{equation}\label{eq33}
		\log \gamma <sc_7(s) \log p_s+\log2.5 r,
	\end{equation} where $c_7(s)$ is an effectively computable constant that depends only on $s, p_s \text{ and } \epsilon $.
	So,
	$$\log \gamma <\log (p_s^{sc_{7}(s)}2.5r),$$
	i.e.,
	$$ \gamma <p_s^{sc_{7}(s)}2.5r.$$
	Since $\gamma>2\sqrt{d-2}$, then $$d<p_s^{2sc_{7}(s)}(2.5 r)^2<p_s^{2c_8(s)}r^2.$$
	Also,
	\begin{align*}
		l_2&<b_{rs} s<s2(c_4(s) 2c_{7}(s) s\log p_s \cdot \log (c_4(s) 2c_{7}(s) s\log p_s))=:c_9(s).
	\end{align*}
	Thus, we conclude that there are only finitely many possibilities  for $d$ under the assumption that  $\min\{\gamma, n_{rs}\}\geq 2.5 r  p_s$. If $\min\{\gamma, n_{rs}\} < 2.5 r  p_s$, then again we can obtain bounds on $l, n_{rs}$ and $d$. In particular, if $n_{rs} = \min\{\gamma, n_{rs}\} < 2.5 r  p_s$, then by \eqref{eq108}, we have upper bounds on $l, \log \gamma$ and then on $d$. However, if $\gamma=\min\{\gamma, n_{rs}\} < 2.5 r  p_s$, then there exists an upper bound on $d$ and then on $l$ and $n_{rs}$. This completes the proof of Theorem \ref{th1}.
	\section{Proof of Theorem \ref{th2}}
	Suppose that for $1 \leq l_1 < l_2$, 
	\begin{equation}\label{eq34}
		X_{l_1}=2^{a_1}3^{a_2} \quad \text{ and  }\quad 	X_{l_2}=2^{b_1}3^{b_2},
	\end{equation}
	with $a_1\leq a_2$ and $b_1\leq b_2$. We denote 
	\begin{align*}
		(l, n_{1}, n_{2})&=(l_1, a_{1}, a_{2})
		&\text{  or }&
		&	(l, n_{1}, n_{2})=(l_2, b_{1},b_{2}).
	\end{align*}
	Assume $n_2>n_1$.
	From the proof of Theorem \ref{th1},
	\begin{equation}\label{eq106}
		3^{n_{2}}< X_l=2^{n_1} 3^{n_2}< 3^{2 n_{2}}.
	\end{equation}
	Also 
	\[3^{a_{2}}\leq X_{l_1} \leq X_{l_2}\leq  3^{2 b_{2}}\] and this implies $a_{2}\leq 2 b_{2}$. From (\ref{eq106}) and (\ref{eq31}), we get
	$3^{n_{2}}< \gamma^{l}< 2.5 X_l< 2.5 \cdot 3^{2 n_{2}}$. This gives, 
	\begin{equation}\label{eq18}
		n_{2}\log 3<l \log \gamma < 2 n_{2}\log 3+\log 2.5 .
	\end{equation} 
	Using (\ref{eq7a}), rewrite \eqref{eq3b} as
	$$ \frac{\gamma^l}{ 2^{(n_1+1)}3^{n_2}}-1= \frac{\eta^l}{ 2^{(n_1+1)}3^{n_2}}.$$ Setting $\Lambda:=\gamma^l 2^{-(n_{1}+1)} 3^{-n_2}-1$, we get
	\begin{equation}\label{eq41}
		|\Lambda|	\leq \left|\frac{\eta^l}{ 2^{(n_{1}+1)} 3^{n_2} }\right| \leq \dfrac{1}{3^{n_2}}.
	\end{equation}
	Put 
	\begin{equation}\label{eq41a}
		\Gamma=l \log \gamma-(n_1+1)\log 2-n_2\log 3
	\end{equation} and since $|\Lambda|=|e^{\Gamma}-1|<\dfrac{1}{2}$, so $|\Gamma|<2|\Lambda|<2\cdot 3^{-n_2}$. Since $\gamma $ is not an integer, so $\Lambda \neq 0.$  Hence we can use Matveev's theorem  with $t=3$ and 
	$$(\eta_{1},d_1)=(2,-(n_1+1)), \qquad (\eta_{2},d_2)=(\gamma,l), \qquad (\eta_{3},d_3) =(3,-n_2 ).$$Further, $d_{\mathbb{L}}=2, h(2)=\log 2, h(\gamma)=\frac{1}{2}\log \gamma,  h(3)=\log 3, A_1=2\log 2, A_2=\log\gamma, A_{3}=2 \log 3$. Assume that $\min\{\gamma, n_2\}\geq 7.5$. Then one can see that $l<2n_2$. Applying Matveev's theorem ,
	\begin{align*}
		\log|\Lambda|&>-c_{10}(\log n_{2}) (\log\gamma)\log 3,
	\end{align*} where $c_{10}=1.33 \cdot 10^{14}$.
	Comparing above inequality with (\ref{eq41}), we get 
	\begin{equation}\label{eq022}
		n_{2}<c_{11}(\log n_2)\log \gamma, \text{ with } c_{11}=1.34\cdot 10^{14}.
	\end{equation} 
	To eliminate $\log \gamma$, let
	$$\Gamma_1^{(i)}:=-(m_1+1)\log 2-m_2\log 3+l_i \log \gamma,$$ where $m_i=a_i \text{ or } b_i$ with $i=1, 2$. Define
	\begin{align*}
		\Gamma_2&:=l_1\Gamma_1^{(2)}-l_2\Gamma_1^{(1)}\\&=\left(l_2-l_1+l_2a_1-l_1b_1\right)\log 2+(l_2a_2-l_1b_2)\log 3. \numberthis\label{eq0023}
	\end{align*}
	 Then proceeding as in the proof of Theorem \ref{th1}, we get 
	 \begin{align*}
	 	|\Gamma_2| \leq c_{12}\cdot l_23^{-\frac{a_2}{2}} ~\text{ with $c_{12}=4$,  } 
	 	\numberthis \label{eq43}
	 \end{align*}
	\[b_2<2 \cdot 4c_{11}a_2 \log 3 \cdot \log(4c_{11}a_2 \log 3)< 8.62 \cdot 10^{28} \quad \mbox{and}\quad l_2 < 2b_2< 17.2\cdot 10^{28}.\]
	These bounds are very large for a computational search. Our next aim is to reduce these bounds to smaller numbers using  Lemma \ref{lemma3.4} and \ref{lemma3.5}. From  \eqref{eq0023} and \eqref{eq43}, 
	\begin{equation*}
		\left|\dfrac{\log 2}{\log 3}-\dfrac{(l_1 b_2-l_2 a_2)}{l_2(1+a_1)-l_1(1+b_1)}\right| \leq \dfrac{c_{12}l_2}{3^{\frac{a_2}{2}}\cdot (l_2(1+a_1)-l_1(1+b_1))(\log 3)}
	\end{equation*} 
	To use Lemma \ref{lemma3.4}, put $\tau:=\dfrac{\log 2}{\log 3}$ and $M_1:=8.62\cdot 10^{28}$. With the help of Mathematica finding $q$ such that $q>M_1$ as
	$q_{60}>8.62\cdot 10^{28}$ and $a(M_1)=55$. Now applying Lemma \ref{lemma3.4},
	\begin{align*}
		3^{\frac{a_2}{2}}<\dfrac{c_{12}l_2 (l_2(1+a_1)-l_1(1+b_1))57}{ \log 3}<1824 \cdot M_1^3,
	\end{align*}
	which implies that, 
	\begin{align*}
		a_2< \dfrac{2\log(1824 \cdot M_1^3)}{\log 3}<377.55 
	\end{align*}
	i.e., $a_2 \leq 377$ and this gives $b_2<1.78 \cdot 10^{19}$. To get better bounds repeat this process with new $M$ value. Next put $M_2=1.78 \cdot 10^{19}$. Finding the value of $q$  as, $q_{41}>1.78 \cdot 10^{19}$ and $a(M_2)=55$. Hence $a_2 \leq 255$ and $b_2< 1.2\cdot 10^{19}$. Using Lemma \ref{lemma3.4} again with $M_3= 1.2\cdot 10^{19}$ to get $a_2 \leq 253$. This way will not give any better bounds further. So we will use another method.
	
	Consider the following polynomial with integer coefficients:
	\begin{align*}
		X_l &=\dfrac{1}{2} \left(\gamma^l+\eta^l
		\right)=\dfrac{1}{2}\left((X_1+Y_1 \sqrt{d})^l+(X_l-Y_l \sqrt{d})^l\right)\\&=\dfrac{1}{2}\left(\left(X_1+\sqrt{X_1^2-1}\right)^l+\left(X_1-\sqrt{X_1^2-1}\right)^l\right):=P_l(X_1).
	\end{align*}
	Now from (\ref{eq18}) finding a bound for $l_1$ as,
	\begin{equation}\label{eq034}
		l_1< \dfrac{(2\cdot 253) \log 3+\log 2.5}{\log \gamma}.
	\end{equation} We know that $X_1 \geq \sqrt{d-1}$ and $ \gamma>2\sqrt{d-2}$. Assume that $d>401$, which gives $X_1>20$ and  $l_1 \leq 150$. Now from (\ref{eq34}),
	\begin{equation}\label{eq35}
		P_{l_1}(X_1):=2^{a_1}3^{a_2}
	\end{equation}with $a_2\in [0,253], a_1\in [0,a_2], l_1 \in [2,150]$ and $X_1>20$. Note that $l_1\geq 2$, because if we are taking $l_1=1$ we get $P_{l_1}(X_1)=X_1$. In this case there is nothing to solve.
	
	For the case $l_1>1$ and $d>401$, a quick computer search on (\ref{eq35}) yields nothing; so if $l_1>1$ then $1<d\leq 401$. In this case also, we are not getting any solutions. 
	
	Since  $l_1$ to be minimal, we must have $l_1=1$ for all $d$. Also $\gamma=X_1+\sqrt{X_1^2-1}$. From (\ref{eq41a}),
	$$|l\log \gamma - (n_1+1) \log 2- n_2 \log 3 |<\dfrac{2}{3^{n_2}}.$$ Rearranging and comparing with Lemma \ref{lemma3.5},
	$$ m:=n_2, \tau:=\dfrac{\log 3}{\log \gamma}, \mu:=\dfrac{254\cdot \log 2}{\log \gamma}, A:=\dfrac{2}{\log \gamma}, B:=3,  k:=n_2.$$ We have an upper bound for $b_2$, which is $M_4:= 1.179\cdot 10^{19}$. Reduce this bound again and denote the reduced bound by $M_5(X_1)$ for each $X_1$. Using Mathematica we get,  $\underset{X_1}\max~M_5(X_1)=52$. The $X$-coordinates of (\ref{eqpell}) satisfies the recurrence relation,
	$$X_{l} = 2X_1 X_{l-1}-X_{l-2}$$ and for each $X_1$  a bound for $l_2$ is obtained using \eqref{eq18} denoting it by $l(X_1)$:
	$$l_2<\dfrac{2\cdot M_5(X_1)\log 3+\log2.5}{\log \gamma}:=l(X_1).$$ So, $\underset{X_1}\max~l(X_1)=130$. Now finding the minimum of $\epsilon_1(X_1)$ of Lemma \ref{lemma3.5} as $\underset{X_1}\min~{\epsilon_1}(X_1)=0.0011$. Now it remains to verify whether $\dfrac{X_{l_2}}{3^{b_2}}$ is of the form  $2^{b_1}$. Checking this for each value of $X_1$ in the range
	$$l_2 \in[2,130]  \text{ and }b_2 \in[0,52]$$ using Mathematica, we find no solutions. This completes the proof of the Theorem \ref{th2}.
	
	{\bf Acknowledgment:}  First author's research is supported by UGC Fellowship (Ref No. 221610077314). S.S.R. work is supported by grant from Anusandhan National Research Foundation (ANRF) (File No.:CRG/2022/000268) and from National Board for Higher Mathematics (NBHM), Sanction Order No: 14053.


\begin{thebibliography}{100}
		\bibitem{Baker1969}  A. Baker and H. Davenport, \emph{The equations $3x^2 - 2 = y^2$ and $8x^2 - 7 = z^2$},  Quart. J. Math. Oxford Ser. {\bf 20} (1969), 129--137.
		
		\bibitem{bhpr1} A. B\'erczes, L. Hajdu, I. Pink and S. S. Rout, \emph{Sums of $S$-units in recurrence sequences}, J. Number Theory \textbf{196} (2019), 353-363.
		
		\bibitem{bhpr2} Cs. Bert\'ok, L. Hajdu, I. Pink and Zs. R\'abai,
		\emph{Linear combinations of prime powers in binary recurrence sequences}, Int. J. Number Theory \textbf{13} (2017), 261--271.
		
		\bibitem{birlucatogbe2018} K. Bir, F. Luca and A. Togb\'e, \emph{On the $X$-coordinates of Pell equations which are Fibonacci numbers II}, Colloq. Math. \textbf{149} (2018), 75-85.
		
		
		\bibitem{Bugeaud2006} Y. Bugeaud, M. Mignotte and S. Siksek, \emph{Classical and modular approaches to exponential
			Diophantine equations I. Fibonacci and Lucas perfect powers}, Ann Math. {\bf 163} (2006), 969--1018.
		
		\bibitem{cohen2007}
		Cohen, H. Number Theory. Volume II: Analytic and Modern Tools. Graduate Texts in Mathematics. Springer, New York (2007).
		
		\bibitem{Cohn1996} J. H. E. Cohn, \emph{Perfect Pell powers}, Glasg. Math. J. {\bf 38} (1996), 19--20.
		
		\bibitem{dlt2016} A. Dossavi-Yovo, F. Luca and A. Togb\'e, \emph{On the $x$-coordinates of Pell equations which are rep-digits}, Publ. Math. Debrecen \textbf{88} (2016), 381-399.
		
		\bibitem{Dujella1998}  A. Dujella and A. Peth\H{o}, \emph{A generalization of a theorem of Baker and Davenport},  Quart. J. Math. Oxford Ser. {\bf 49} (1998), 291--306.
		
		\bibitem{EGL2020} H. Erazo, C. A. G\'{o}mez, and F. Luca, \emph{Linear combinations of prime powers in $X$-coordinates of Pell equations}, Ramanujan J. {\bf 53} (2020), 123-137.
		
		\bibitem{fl2018} B. Faye and F. Luca, \emph{On the $X$-coordinates of Pell equations which are rep-digits}, Fibonacci Q. \textbf{56} (2018), 52-62.
		
		\bibitem{glz2020} C. A. G{\'o}mez, F. Luca and F. S. Zottor, \emph{On $X$-coordinates of Pell equations
			which are rep-digits}, Res. Number Theory \textbf{6}(4) (2020), paper no. 41.
		
		\bibitem{hs} L. Hajdu and P. Sebesty\'en, \emph{Sums of $S$-units in the solution sets of generalized Pell equations}, Arch. Math.\textbf{115} (2020), 279-287.
		
		
		\bibitem{luca 2017} F. Luca, A. Montejano, L. Szalay and A. Togb{\'e}, \emph{On the $X$-coordinates of Pell equations which are tribonacci numbers}, Acta Arith. \textbf{179} (2017), 25-35.
		
		\bibitem{lucatogbe2018}  F. Luca and A. Togb\'e, \emph{On the $X$-coordinates of Pell equations which are Fibonacci numbers}, Math. Scand. \textbf{122} (2018), 18-30.
		
		\bibitem{Marques2013} D. Marques and A Togb\'{e}, \emph{Fibonacci and Lucas numbers of the form $2^a + 3^b + 5^c$},  Proc. Japan Acad. Ser. A Math. Sci. {\bf 89} (2013), 47 - 50.
		
		
		
		\bibitem{Matveev2000} E. M. Matveev, \emph{An explicit lower bound for a homogeneous rational linear form in the logarithms of algebraic numbers},  Izv. Math. {\bf 64} (2000), 1217--1269.
		
		
		\bibitem{Petho1982} 
		A. Peth\H{o},\emph{ Perfect powers in second order linear recurrences}, J. Number Theory {\bf 15} (1982), 5-13.
		
		
		\bibitem{Petho1991} 
		A. Peth\H{o}, \emph{The Pell sequence contains only trivial perfect powers},  Coll. Math. Soc. J. Bolyai, 60 sets, Graphs and Numbers, Budapest (1991), 561-568.
		
		\bibitem{Petho1993} 
		A. Peth\H{o} and R. F. Tichy,
		\emph{	$S$-unit equations, linear
			recurrences and digit expansions},  Publ. Math. Debrecen {\bf 42} (1993), 145-154.
		
		
		\bibitem{Shorey1983} 
		T. N. Shorey and C. L. Stewart, \emph{On the Diophantine equation $ax^{2t}+bx^{t}y+cy^{2}=d$ and pure powers in recurrence sequences},  Math. Scand. {\bf 52} (1983), 24-36.
		
		
		
	\end{thebibliography}
\end{document}